\newcommand\blfootnote[1]{%
  \begingroup
  \renewcommand\thefootnote{}\footnote{#1}%
  \addtocounter{footnote}{-1}%
  \endgroup
}
\newtheorem{theorem}{Theorem}[section]
\newtheorem{lemma}[theorem]{Lemma}
\DeclareMathOperator{\diag}{diag}
\DeclareMathOperator{\tr}{tr}
\DeclareMathOperator{\spec}{sp}
\def\Re{{\rm Re\,}}
\def\vec0{\mbox{\boldmath $0$}}
\def\A{\mbox{\boldmath $A$}}
\def\CC{\mbox{\boldmath $C$}}
\def\I{\mbox{\boldmath $I$}}
\def\M{\mbox{\boldmath $M$}}
\def\T{\mbox{\boldmath $T$}}
\def\I{\mbox{\boldmath $I$}}
\def\C{\mathbb C}
\def\Re{\mathbb R}
\begin{document}
\title{A new approach to gross error\\
detection for GPS networks \thanks{This research have been partially supported by the {\em Catalan Research Council}, AGAUR, under project 2017SGR1087.}}

\author{C. Dalf\'o$^a$, M. A. Fiol$^b$
\\ \\
{\small $^{a,b}$Departament de Matem\`atiques} \\
{\small  Universitat Polit\`ecnica de Catalunya} \\
{\small $^b$Barcelona Graduate School of Mathematics} \\
{\small Barcelona, Catalonia} \\
{\small {\tt\{cristina.dalfo,miguel.angel.fiol\}@upc.edu}} \\
}
\date{}
\maketitle

\maketitle

\begin{abstract}
We present a new matrix-based approach to detect and correct gross errors in GPS geodetic control networks. The study is carried out by introducing a new matrix, whose entries are powers of a (real or complex) variable, which fully represents the network.
\end{abstract}

\blfootnote{
\begin{minipage}[l]{0.3\textwidth} \includegraphics[trim=10cm 6cm 10cm 5cm,clip,scale=0.15]{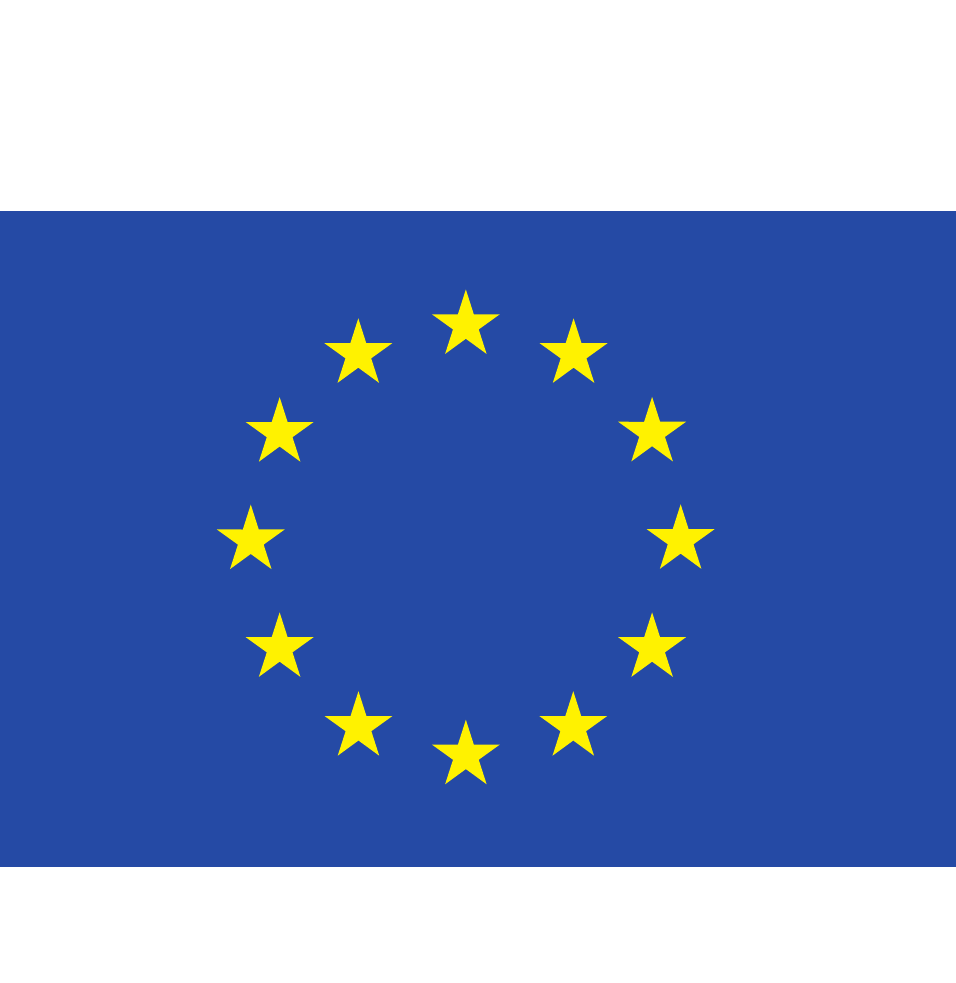} \end{minipage}  \hspace{-2cm} \begin{minipage}[l][1cm]{0.79\textwidth}
   The first author has also received funding from the European Union's Horizon 2020 research and innovation programme under the Marie Sk\l{}odowska-Curie grant agreement No 734922.
  \end{minipage}}

\noindent{\em Keyword:} GPS control network; weighted digraph; polynomial adjacency matrix; gross error detection.

\noindent{\em Mathematics Subject Classifications:} 05C20; 05C50; 90B10.

\section{Introduction}

Basically, a GPS network can be seen as a set of nodes, representing spatial points $u\in \Re^3$, together with some information about their exact positions $(u(x_1),u(x_2),u(x_3))$. Thus, this network can be modeled with a directed graph (or digraph) with three isomorphic components $G_i=(V_i,E_i)$, one for each value of the coordinates $x_i$, for $i=1,2,3$, with vertices representing the distinct points. Moreover, there is an arc $uv\in E_i$ from vertex $u$ to vertex $v$ if we measure the three components $\Delta x_i=v(x_i)-u(x_i)$, for $i=1,2,3$, of the GPS vector from $u$ to $v$. Because of the isomorphism between the $G_i$'s, we can concentrate our study to only one component, say $G=(V,E)=G_1$ with arcs giving information about one coordinate, say, $x=x_1$. Then, if $G$ is a (connected) tree, we can extract a unique value for the components $x(u)$ of each vertex $u\in V$. However, in order to improve the accuracy of the positions, we have somehow redundant measures, which implies the presence of (not necessarily directed) cycles in $G$. Then, in this framework, through each cycle the components $\Delta x(u)$ ideally would sum up to zero (satisfying the so-called `loop law'). Because of the error measures, in general this is not the case and such a sum is required not to exceed a maximum tolerated value. Otherwise, the network must be readjusted by ignoring or changing some measures (that is, removing or changing the weights of some arcs).
In order to do this, some methods have been proposed (see Even-Tzur \cite{et01}, and Katambi, Guo, and Kong \cite{kjx02}). They are based on the determination of a fundamental set of independent cycles, by using the spanning trees and incidence matrix associated to the network.

In this paper, we propose a new method of detecting error measures and correcting (or removing) them.
This study is carried out by introducing a new matrix, whose entries are powers of a (real or complex) variable. This matrix fully represents the weighted digraph.

\section{Preliminaries}

First, we recall some basic notation and results. 
A digraph $G=(V,E)$ consists of a (finite) set
$V=V(G)$ of $n=|V|$ vertices, and a set $E=E(G)$ of $m=|E|$ arcs (directed edges) between vertices of $G$.
If $a=uv\in E$ is an arc from $u$ to $v$, then vertex $u$ (and arc $a$) is
{\em adjacent to} vertex $v$, and vertex $v$ (and arc $a$) is {\em adjacent from} $v$. Let $G^+(v)$ and $G^-(v)$ denote, respectively, the set of vertices adjacent from and to vertex $v$. 
A digraph $G$ is
$d$\emph{-regular} if $|G^+(v)|=|G^-(v)|=d$ for all $v\in V$.
Recall also that a digraph $G$ is \emph{strongly connected} if there is a (directed) walk between every
pair of its vertices. If the {\em underlying graph} $UG$, obtained by turning arcs into (undirected) edges, is connected, then the digraph is called \emph{weakly connected}.
The {\em adjacency matrix} $\A=(\A)_{uv}=(a_{uv})$ of $G=(V,E)$ is a $0$-$1$ matrix indexed by the vertices of $V$ and has entries $a_{uv}=1$ if $uv\in E$, and $a_{uv}=0$ otherwise. The spectrum of a (diagonalizable)  matrix $\M$ is denoted by
$$
\spec \M=\{[\lambda_0]^{m_0},[\lambda_1]^{m_1},\ldots,
[\lambda_d]^{m_d}\},
$$
where $\lambda_0>\lambda_1>\cdots >\lambda_d$ and the superscripts stand for multiplicities, or simply by a vector $(\theta_1,\theta_2,\ldots,\theta_n)$ whose entries are the $n$ eigenvalues (with possible repetitions).
For other basic concepts and definitions about digraphs and algebraic graph theory, see for instance 
Biggs \cite{b93}
Bang-Jensen and Gutin~\cite{BG07}
or Diestel \cite{d10}.

We also  need to recall the following known result (see for instance  Gould \cite{go99}.).

\begin{lemma}
\label{gould}
If the power sums $s_k=\sum_{i=1}^n z_i^k$ of some complex numbers $z_1,\ldots,z_n$ are known for every $k=1,\ldots,n$, then such numbers are
the roots of the monic polynomial
$$
p(z)=\frac{1}{n!}\det \CC(z),
$$
where $\CC(z)$ is the following matrix of dimension $n+1$:
$$
\CC(z)=
\left(
\begin{array}{cccccccc}
z^n & z^{n-1} & z^{n-2} & z^{n-3} & \cdots & z^2 & z & 1 \\
s_1 & 1       &   0     &    0    &  \cdots & 0 & 0 & 0 \\
s_2 & s_1     &   2     &    0    &  \cdots & 0 & 0 & 0 \\
s_3 & s_2     &   s_1   &    3    &  \cdots & 0 & 0 & 0 \\
\vdots & \vdots     &   \vdots   &    \vdots    &  \ddots & \vdots & \vdots & \vdots \\
s_{n-1} & s_{n-2}     &   s_{n-3}   &    s_{n-4}    &  \cdots & s_1 & n-1 & 0 \\
s_n     & s_{n-1} & s_{n-2}     &   s_{n-3}   &     \cdots & s_2 & s_1 & n \\
\end{array}
\right).
$$
\end{lemma}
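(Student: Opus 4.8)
To prove Lemma~\ref{gould}, the plan is to deduce it from Newton's identities. Let $q(z) = \prod_{i=1}^{n}(z-z_i) = z^n + x_1 z^{n-1} + x_2 z^{n-2} + \cdots + x_n$ be the unique monic polynomial vanishing at all the $z_i$, so that $x_k = (-1)^k e_k$, where $e_k$ is the $k$-th elementary symmetric function of $z_1,\dots,z_n$. Newton's identities (which one may obtain, e.g., by comparing coefficients of $z^{n-k}$ on the two sides of $z\,q'(z) = q(z)\sum_{j\ge 0} s_j z^{-j}$) state that, for $1 \le k \le n$,
\[
s_k + x_1 s_{k-1} + x_2 s_{k-2} + \cdots + x_{k-1} s_1 + k\, x_k = 0 .
\]
Read as a linear system in $x_1,\dots,x_n$, these $n$ relations have coefficient matrix $T$ that is lower triangular with diagonal $1,2,\dots,n$ (so $\det T = n!$) and right-hand side $(-s_1,\dots,-s_n)^{\top}$; hence they determine the $x_k$ uniquely. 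It therefore suffices to prove that $p(z) := \frac{1}{n!}\det\CC(z)$ coincides with $q(z)$, for then the $z_i$ are precisely the roots of $p$.

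Since the variable $z$ occurs in $\CC(z)$ only in the first row, $\det\CC(z)$ is a polynomial in $z$ of degree at most $n$, and expanding along that row is the natural tool. The coefficient of $z^n$ is the $(1,1)$-cofactor of $\CC(z)$, namely the determinant of its lower-right $n\times n$ block; that block is exactly the triangular matrix $T$ above, so the coefficient of $z^n$ in $\det\CC(z)$ is $n!$ and $p(z)$ is monic of degree $n$, matching the leading term of $q(z)$.

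For the lower-order coefficients, expanding along the first row again, the coefficient of $z^{n-k}$ in $\det\CC(z)$ equals $(-1)^{k} M_{1,k+1}$, where $M_{1,k+1}$ is the minor of $\CC(z)$ obtained by deleting row $1$ and column $k+1$ (this minor does not involve $z$). After deleting the top row, the first column of $\CC(z)$ is $(s_1,\dots,s_n)^{\top}$ and its columns $2,\dots,n+1$ form precisely the matrix $T$; hence $M_{1,k+1}$ is the determinant of the $n\times n$ matrix whose columns are $(s_1,\dots,s_n)^{\top}$ followed by all columns of $T$ except the $k$-th. Moving the power-sum column into the $k$-th slot by $k-1$ transpositions, and then replacing it by the true right-hand side $(-s_1,\dots,-s_n)^{\top}$ at the cost of one more sign, shows $M_{1,k+1} = (-1)^{k}\det T^{(k)}$, where $T^{(k)}$ is $T$ with its $k$-th column replaced by that right-hand side. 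By Cramer's rule $x_k = \det T^{(k)}/\det T$, so the coefficient of $z^{n-k}$ in $p(z)$ is $\frac{1}{n!}(-1)^k(-1)^k\det T^{(k)} = x_k$. Therefore $p(z) = q(z) = \prod_{i=1}^{n}(z-z_i)$, as claimed.

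The one genuinely delicate point is the sign accounting in the last paragraph: one has to combine correctly the Laplace-expansion cofactor sign $(-1)^{1+(k+1)}$, the sign of the cyclic shift that puts the power-sum column in the position required by Cramer's rule, and the sign by which that column of $\CC(z)$ differs from the actual right-hand side $(-s_1,\dots,-s_n)^{\top}$ of Newton's system. A slightly more pedestrian route that avoids the global permutation sign is to induct on $k$, expanding $M_{1,k+1}$ along its last row and feeding in the $k$-th Newton identity together with the inductive hypothesis; the two arguments carry the same content, namely Newton's identities rewritten as a single determinant.
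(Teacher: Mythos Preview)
Your proof is correct. The sign bookkeeping in the Cramer step checks out: the Laplace cofactor sign is $(-1)^{k}$, moving the power-sum column from position~$1$ to position~$k$ contributes $(-1)^{k-1}$, and flipping $\mathbf{s}$ to $-\mathbf{s}$ contributes one more $-1$, so the coefficient of $z^{n-k}$ in $\frac{1}{n!}\det\CC(z)$ is indeed $\det T^{(k)}/n! = x_k$.

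As for comparison with the paper: the paper does not prove this lemma at all. It is stated as a known result and simply attributed to Gould~\cite{go99}; no argument is given. So there is nothing to compare your approach against --- you have supplied a complete proof where the paper only gives a citation. Your route (Newton's identities read as a triangular linear system, then Laplace expansion along the first row combined with Cramer's rule) is the standard derivation of this determinantal formula and is exactly what one would expect to find behind the reference.
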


\section{A new matrix representation}

Let $G=(V,E)$ be a digraph with a weight function $w:E\rightarrow \Re$, which, to every arc $uv$, assigns the measured value of a coordinate, say $x$, of the vector from $u$ to $v$. Notice that there is no loss of generality if we assume that all weights are positive, since we can reverse the direction of some arcs to go from $w(uv)=-\alpha$  to $w(vu)=\alpha$.
Then, its associated {\em polynomial matrix} $\A(z)$ is a square matrix indexed by the vertices of $G$ and whose entries are powers of (a real or complex) variable $z\neq 0$, as follows:
$$
(\A(z))_{uv}=\left\{
\begin{array}{ll}
z^{w(uv)}, & \mbox{if $uv\in E$,}\\
z^{-w(uv)}, & \mbox{if $vu\in E$,}\\
0, & \mbox{otherwise.}
\end{array}\right.
$$
In particular, note that $\A(1)$ coincides with the standard adjacency matrix of the underlying graph $UG$. Thus, it is well known that the $uv$-entry of the power $(\A(1))^r$ is the number of walks of length $r$ in $UG$.

Before giving the corresponding result for an indeterminate $z$, we introduce some additional notation.
For a given arc $uv\in E$, let $w^*(uv)=w(uv)$ and $w^*(vu)=-w(uv)$.
Given two vertices $u,v$ and an integer $r\ge 0$, let ${\cal P}_r(u,v)$ be the set of $r$-walks from $u$ to $v$ in $UG$. For any $p\in {\cal P}_r(u,v)$, say $p=u_1(=u),u_2,\ldots,u_r(=v)$, let $w^*(p)=1$ if $r=1$, and
$$
w^*(p)=\sum_{i=1}^{r-1} w^*(u_iu_{i+1})\quad\mbox{for $r>1$}.
$$
For any pair of vertices $u$ and $v$, the powers of the polynomial matrix gives information about the values of $w^*(p)$, as shown in the following basic lemma.

\begin{lemma}
\label{basic-lemma}
For each $r\ge 0$, the $uv$-entry of the power $\A(z)^r$ is
\begin{equation}
(\A(z)^r)_{uv}=\sum_{p\in {\cal P}_r(u,v)} z^{w^*(p)}.
\end{equation}
\end{lemma}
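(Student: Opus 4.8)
The natural approach is induction on $r$, since the statement for $r=0$ and $r=1$ is immediate from the definitions: ${\cal P}_0(u,v)$ is empty unless $u=v$ (giving the identity matrix), and for $r=1$ the only $1$-walks are single arcs, for which $w^*(p)=1$ by convention while $(\A(z))_{uv}$ is $z^{w^*(uv)}$ — so one should check the base case is consistent with whatever convention makes the telescoping work, and in fact the cleaner normalization is to set $w^*(p)=0$ (not $1$) for the trivial and single-arc cases, matching $(\A(z))_{uv}=z^{w^*(uv)}$; I would flag this and proceed with the convention that actually makes the recursion close.

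First I would write $\A(z)^{r+1}=\A(z)^r\,\A(z)$ and expand the $uv$-entry as $\sum_{t\in V}(\A(z)^r)_{ut}\,(\A(z))_{tv}$. Applying the induction hypothesis to $(\A(z)^r)_{ut}$ turns this into $\sum_{t\in V}\sum_{p\in{\cal P}_r(u,t)} z^{w^*(p)}\,(\A(z))_{tv}$. The key combinatorial step is the bijection between ${\cal P}_{r+1}(u,v)$ and the set of pairs $(p,t)$ where $p\in{\cal P}_r(u,t)$ and $t$ is adjacent to or from $v$ in $UG$: every $(r+1)$-walk $q=u_1,\dots,u_{r+2}$ decomposes uniquely as its length-$r$ prefix $p=u_1,\dots,u_{r+1}$ together with the final vertex $t=u_{r+1}$ and the edge $tv$.

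Next I would verify the exponents add correctly under this decomposition: $w^*(q)=w^*(p)+w^*(tv)$, which is exactly the defining recursion for $w^*$ (the sum over consecutive pairs splits off the last term $w^*(u_{r+1}u_{r+2})$), and correspondingly $z^{w^*(q)}=z^{w^*(p)}\cdot z^{w^*(tv)}$. Since $(\A(z))_{tv}=z^{w^*(tv)}$ precisely when $tv\in E$ or $vt\in E$ (i.e.\ when the edge exists in $UG$) and is $0$ otherwise, the double sum collapses to $\sum_{q\in{\cal P}_{r+1}(u,v)} z^{w^*(q)}$, completing the induction.

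The only real subtlety — and the step I would be most careful about — is bookkeeping the base cases against the $w^*$ convention: the paper writes $w^*(p)=1$ for $r=1$, but matrix multiplication forces the exponent of a single arc to be $w^*(uv)$, so I would either interpret the $r=1$ clause as a typo for $w^*(p)=w^*(u_1u_2)$ or explicitly reconcile the two. Once the convention is pinned down, the induction is routine; there is no analytic or spectral obstacle here, only the standard walk-counting argument refined to track weighted exponents instead of mere counts.
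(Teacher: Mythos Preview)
Your proposal is correct and follows essentially the same route as the paper: both argue by induction on $r$, handling $r=0,1$ directly and for the inductive step expanding $(\A(z)^{r+1})_{uv}=\sum_{t}(\A(z)^{r})_{ut}(\A(z))_{tv}$, applying the hypothesis, and using the decomposition of an $(r{+}1)$-walk into its length-$r$ prefix plus a final edge so that the exponents add via $w^*(q)=w^*(p)+w^*(tv)$. Your flag about the $r=1$ convention is well taken---the clause ``$w^*(p)=1$ if $r=1$'' in the paper is indeed inconsistent with the recursion and with $(\A(z))_{uv}=z^{w^*(uv)}$, and your reading of it as a typo for $w^*(p)=w^*(u_1u_2)$ is the right fix.
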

\begin{proof}
The result is trivial for $r=0$ (since $\A(z)^0=\I$), and it follows from the definition of $\A(z)$ for $r=1$. Now suppose that the result holds for $r=R$.
Then,
\begin{align*}
(\A(z)^{R+1})_{uv} &=\sum_{w\in V}(\A(z)^{R})_{uw}a(z)_{wv}\\
 &=\sum_{wv\in E(UG)}\sum_{p\in {\cal P}_R(u,w)}z^{w^*(p)}z^{w^*(wv)}\\
 &=\sum_{p\in {\cal P}_R(u,w)}\sum_{wv\in E(UG)}z^{w^*(p)+w^*(wv)}
 =\sum_{q\in {\cal P}_{R+1}(u,v)}z^{w^*(q)}.
\end{align*}
The general result follows by induction.
\end{proof}

\section{Detecting and correcting}
In this section, we propose a new method of detecting and correcting gross errors in GPS networks. We first explain how to detect the presence of (significant) errors.

\subsection{Checking the loop law}
As explained in the Introduction, error detection in GPS networks is based on checking the loop law.
With this aim, we are now ready to prove the main result.

\begin{theorem}
\label{main-theo}
Let $G$ be a weighted digraph with polynomial matrix $\A(z)$. Then, $G$ satisfies the loop law if and only if any of the following conditions hold:
\begin{itemize}
\item[$(a)$]
$\diag \A(z)^r=\diag \A(1)^r$
for any $z\neq 0$ and $r=0,1,2,\ldots$.
\item[$(b)$]
$\|\diag \A(z)^r -\diag \A(1)^r \|=0$ for any $z\neq 0,1$ and $r=0,1,2,\ldots$.
\item[$(c)$]
The matrix $\A(z)$ is diagonalizable, with real constant eigenvalues, say $\theta_1,\theta_2,\ldots,\theta_n$, and
$\spec \A(z) =\spec \A(1)$ for any $z\in\C $.
\end{itemize}
\end{theorem}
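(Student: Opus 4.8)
The plan is to route all three equivalences through a single diagonal similarity. First I would record the elementary fact that the following are equivalent for the weighted digraph $G$: (i) the loop law, i.e. $\sum_i w^*(u_iu_{i+1})=0$ around every cycle of $UG$; (ii) $w^*(p)=0$ for \emph{every} closed walk $p$ in $UG$; (iii) on each connected component of $UG$ there is a potential $\phi:V\to\Re$ with $w^*(uv)=\phi(v)-\phi(u)$ for every arc $uv$. Indeed, a closed walk decomposes — by repeatedly splitting off a subwalk between two occurrences of a repeated vertex — into cycles together with trivial pieces $u,v,u$ contributing $w^*(uv)+w^*(vu)=0$, which gives (i)$\Leftrightarrow$(ii); and given (ii) one defines $\phi(u)$ to be the $w^*$-value of any walk from a fixed base vertex of the component to $u$, which is well defined precisely by (ii) and satisfies $w^*(uv)=\phi(v)-\phi(u)$, giving (ii)$\Leftrightarrow$(iii).

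For the ``only if'' part I would use (iii). Setting $\D=\diag(z^{\phi(v)})_{v\in V}$, an invertible (block-diagonal) matrix, a one-line check of entries gives
$$(\D^{-1}\A(1)\D)_{uv}=z^{-\phi(u)}(\A(1))_{uv}z^{\phi(v)}=z^{w^*(uv)}(\A(1))_{uv}=(\A(z))_{uv},$$
so $\A(z)=\D^{-1}\A(1)\D$, hence $\A(z)^r=\D^{-1}\A(1)^r\D$ for every $r$. Conjugation by a diagonal matrix fixes every diagonal entry, so $\diag\A(z)^r=\diag\A(1)^r$ for all $z\neq0$ and all $r$: this is $(a)$, and $(b)$ is the same statement since at $z=1$ the difference vanishes anyway. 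Furthermore $\A(z)$ is similar to the real symmetric matrix $\A(1)$, hence diagonalizable with real eigenvalues independent of $z$, and $\spec\A(z)=\spec\A(1)$; this is $(c)$.

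For the ``if'' part it is enough to prove $(a)\Rightarrow$ loop law and $(c)\Rightarrow$ loop law, since $(a)\Leftrightarrow(b)$ is immediate. Assume $(a)$. By Lemma~\ref{basic-lemma}, for every vertex $u$ and every $r\ge0$,
$$\sum_{p\in{\cal P}_r(u,u)}z^{w^*(p)}=|{\cal P}_r(u,u)|\qquad\text{for all }z\neq0.$$
Collecting terms, the left side is $\sum_k c_k z^k$ with $c_k=|\{p\in{\cal P}_r(u,u):w^*(p)=k\}|\ge0$; a Laurent polynomial that takes a constant value at infinitely many points equals that constant (clear denominators by multiplying by a suitable $z^m$ and use that a polynomial with infinitely many roots is zero), so $c_k=0$ for $k\neq0$. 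Thus every closed walk of $UG$, in particular every cycle, has $w^*$-value $0$, which is the loop law. For $(c)$, it suffices to use $\spec\A(z)=\spec\A(1)$, which gives $\tr\A(z)^r=\sum_i\theta_i^r=\tr\A(1)^r$ for every $r$; summing the identity of Lemma~\ref{basic-lemma} over the diagonal, $\sum_p z^{w^*(p)}$ over all closed $r$-walks $p$ equals their number for all $z\neq0$, and the same Laurent-polynomial argument forces $w^*(p)=0$ for every closed walk, hence the loop law.

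I expect the only mildly delicate point to be the bookkeeping of the preliminary step — moving cleanly among ``loop law on cycles'', ``$w^*$ vanishes on all closed walks'', and ``a potential exists'', and allowing $UG$ to be disconnected — since this is exactly what licenses the diagonal similarity $\A(z)=\D^{-1}\A(1)\D$ that collapses the three implications from the loop law into one computation. Once Lemma~\ref{basic-lemma} is in hand the converse implications are routine, the only ingredient being the standard fact that a Laurent polynomial taking one value at infinitely many points is constant.
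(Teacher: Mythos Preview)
Your proof is correct and takes a genuinely different route from the paper's. The paper argues $(a)$ directly from Lemma~\ref{basic-lemma}, tacitly using that if $\sum_p z^{w^*(p)}$ is constant in $z$ then all exponents vanish, and then handles $(c)$ by taking traces in $(a)$ and invoking Lemma~\ref{gould} on power sums to recover the eigenvalues of $\A(z)$ from $\tr\A(z)^r$ for $r=1,\dots,n$. You instead pass through a potential $\phi$ and the diagonal similarity $\A(z)=\D^{-1}\A(1)\D$, which in one stroke yields both the diagonal identity $(a)$ and all of $(c)$, including the diagonalizability of $\A(z)$---a point the paper's argument does not actually address. Your approach is more structural and buys more: once the similarity is in hand, every spectral fact about the symmetric matrix $\A(1)$ transfers to $\A(z)$ for free, with no need for the Newton--Girard machinery of Lemma~\ref{gould}. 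One small caveat: since the weights $w(uv)$ are real numbers, the exponents $w^*(p)$ need not be integers, so $\sum_p z^{w^*(p)}$ is not literally a Laurent polynomial and your ``multiply by $z^m$ and count roots'' step does not apply verbatim; replace it by the linear independence of the functions $z\mapsto z^a$ on $(0,\infty)$ (equivalently $t\mapsto e^{at}$) for distinct real $a$, which is standard and equally elementary.
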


\begin{proof}
$(a)$ Notice that $G$ satisfies the loop law if and only if $w^*(p)=0$ for any $p\in {\cal P}_r(u,u)$, $u\in V$, and $r\ge 0$. Then,  from Lemma \ref{basic-lemma}, this is equivalent to
$$
(\A(z)^r)_{uu}=\sum_{p\in {\cal P}_r(u,u)} 1= (\A(1)^r)_{uu}
$$
for any $z\neq 0,1$ and $r=0,1,2,\ldots$.
Condition $(b)$ is just a reformulation of $(a)$.
Concerning $(c)$, notice that $\A$ is a symmetric matrix with real eigenvalues, say $\theta_1,\theta_2,\ldots,\theta_n$. Then, from $(a)$,
$$
\tr \A(z)^r=\tr \A(1)^r=\sum_{i=1}^n \theta_i^r
$$
for any $z\in\C$ and $r=1,\ldots,n$. But, by Lemma \ref{gould}, this holds if and only if
$\spec \A(z) =\spec \A(1)$ for any $z\in\C $, as claimed.
\end{proof}

\begin{figure}[t]
\begin{center}
\includegraphics[width=12cm]{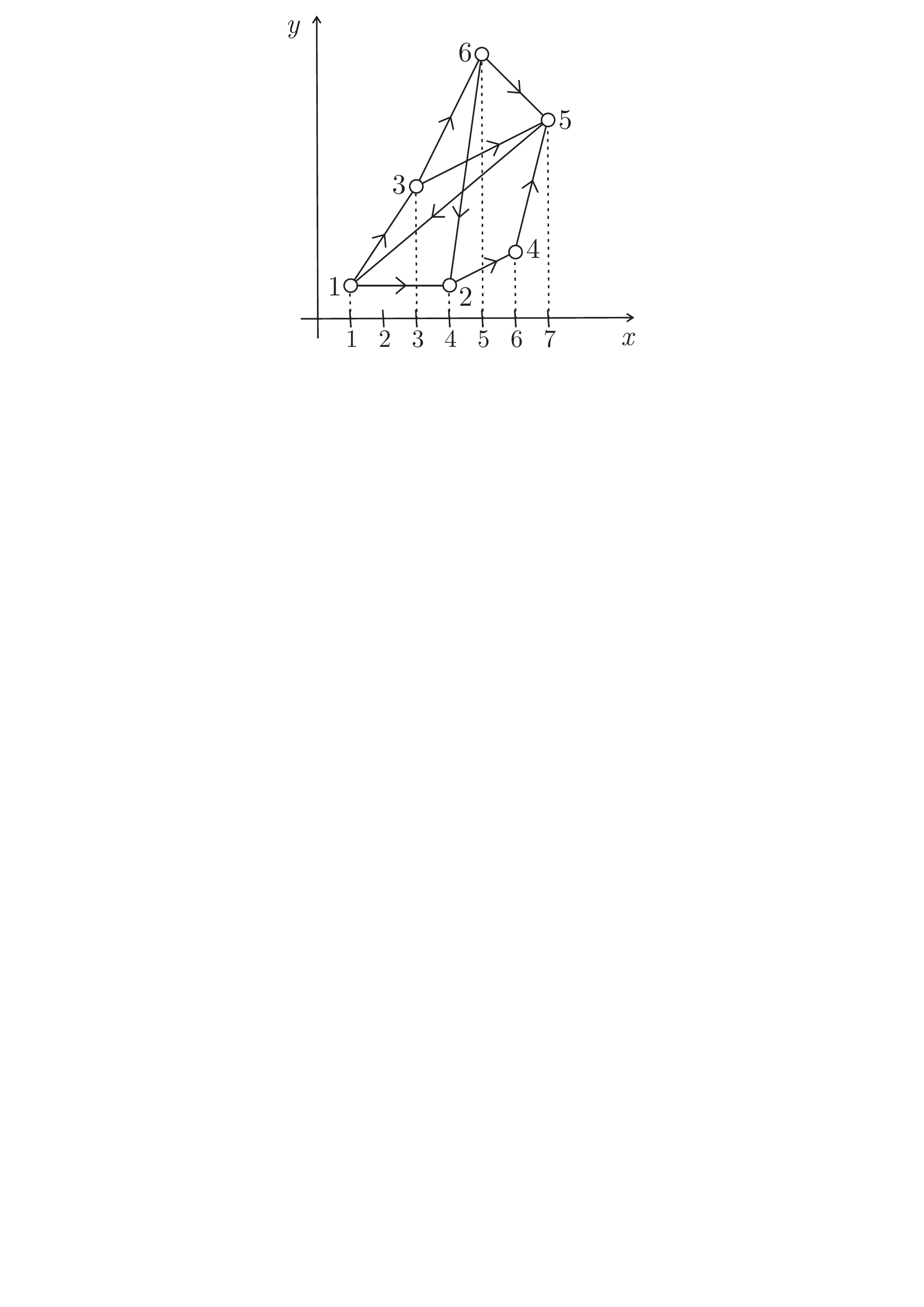}
\end{center}
\vskip-13cm
\caption{A simple GPS network with 6 nodes.}
  \label{fig1}
\end{figure}

\subsection{Correcting gross errors}

First, notice that the norm in condition of Theorem \ref{main-theo}$(b)$ (or distance between the corresponding diagonal vectors) gives us a measure of the correctness of the measures, so that we can require not to exceed a maximum tolerated value.

If that is not the case, we have two options. The most simple one is to remove the suspected error measures, that is, the corresponding entries of the polynomial matrix $\A(z)$, and check again whether the norm in such a condition $(b)$ does not exceed the allowed  maximum.

The second option is to try to correct the alleged errors as much as possible. To this end, we can proceed as follows:
\begin{enumerate}
\item
Assuming that the wrong values are at the arcs $u_iv_i$, for $i=1,\ldots,s$, consider the new polynomial matrix $\A'(z)$ obtained from $\A(z)$ by changing each weight $w^*(u_iv_i)$ to
$w^*(u_iv_i)+x_i$, where $x_i$ represents the error in the measure for $i=1,\ldots,s$.
\item
For the fist value of $r$ that the loop law is not satisfied according to Theorem \ref{main-theo}, and for some value of $z\neq 1$, say $z=2$, find the values of $e_i$
that minimize any of the following functions:
\begin{itemize}
\item
$e_a(x_1,\ldots,x_s)=\diag \A(2)^r-\diag \A(1)^r$.
\item
$e_b(x_1,\ldots,x_s)=\|\diag \A(2)^r-\diag \A(1)^r\|$.
\item
$e_c(x_1,\ldots,x_s)=\spec \A(2)^r-\spec \A(1)^r$.
\end{itemize}
\end{enumerate}

\section{Examples}
Let us see how our method applies by following two examples.
The first one only intends illustrate the method in a very simple situation, while the second deals with experimental data drawn from \cite{et01}.

\subsection{A simplified example}
For a simple example, look at the digraph of Figure \ref{fig1} and suppose that the weights of the arcs are the ones indicated by the exponents in the entries of the following polynomial matrix (which correspond to the abscissas of the points):
\begin{equation}
\label{matrix-1}
\A(z)=
\left(
\begin{array}{cccccc}
0& z^3& z^2& 0& z^6& 0 \\
z^{-3}& 0& 0& z^2& 0& z \\
z^{-2}& 0& 0& 0& z^4& z^2 \\
0& z^{-2}& 0& 0& z& 0\\
z^{-6}& 0& z^{-4}& z^{-1}& 0& z^{-2}\\
0  & z^{-1} & z^{-2}& 0& z^2& 0
\end{array}
\right).
\end{equation}
Then, the diagonal vectors of the testing matrices, say,  $\T^{(2)}=\A(2)^r-\A(1)^r$ have zero norm for any $r\ge 0$, showing that there are no errors in the measures (weights of the arcs).

Alternatively, we reach the same conclusion by computing the spectrum
$$
\spec \A(z)=\{ [3.092]^1,[0.702]^1, [0]^2,[-1.285]^1,[-2.508]^1 \},
$$
which is independent of $z$, as expected since $\tr(\A(z)^r)=\tr(\A(1)^r)$ for every $r=0,1,2,\ldots$

Now, consider a new set of arc weights given by the matrix
\begin{equation}
\A(z)=
\left(
\begin{array}{cccccc}
0& z^3& z^2& 0& z^6& 0 \\
z^{-3}& 0& 0& z^2& 0& z^2 \\
z^{-2}& 0& 0& 0& z^4& z^2 \\
0& z^{-2}& 0& 0& z& 0\\
z^{-6}& 0& z^{-4}& z^{-1}& 0& z^{-2}\\
0& z^{-2}& z^{-2}& 0& z^2& 0
\end{array}
\right).
\end{equation}
Then, the norms of the diagonal vectors of the testing matrices $\T^{(r)}=\A(2)^r-\A(1)^r$ for $r=1,\ldots,6$, are
$$
 0,\ 0,\ 0,\ 6,\ 15/2,\ 78,
$$
indicating that there is some error in the measures.
To detect it, we observe that the diagonal of the matrix $\T^{(4)}$ (since $4$ is the first value of $r$ for which the diagonal of $\T^{(r)}$ is not null). That is,
$$
\textstyle
\diag \T^{(4)}=\left(1,\frac{3}{2},\frac{1}{2},\frac{1}{2},1,\frac{3}{2}\right),
$$
whence, by looking at the largest entries, we conclude that, most probably, the error is the measure between the points 2 and 6.
In fact, this phenomenon is made more apparent by looking at the following limit:
$$
\lim_{z\rightarrow \infty}\left(\frac{1}{z}(\A(z)^4-\A(1)^4)\right)=(2,3,1,1,2,3).
$$

If the arc between points $2$ and  $6$ has weight $2+x$, then we get an ``error" as a function of $x$. Namely,
$$
e(x)=\tr\A(2)^4-\tr\A(1)^4=24\cdot 2^x+6\cdot 2^{-x}-24,
$$
which has minimum value zero when $x=-1$, so indicating that the correct weight must be $1$, as shown in  \eqref{matrix-1}.

\begin{figure}[h]
\begin{center}
\includegraphics[width=6cm]{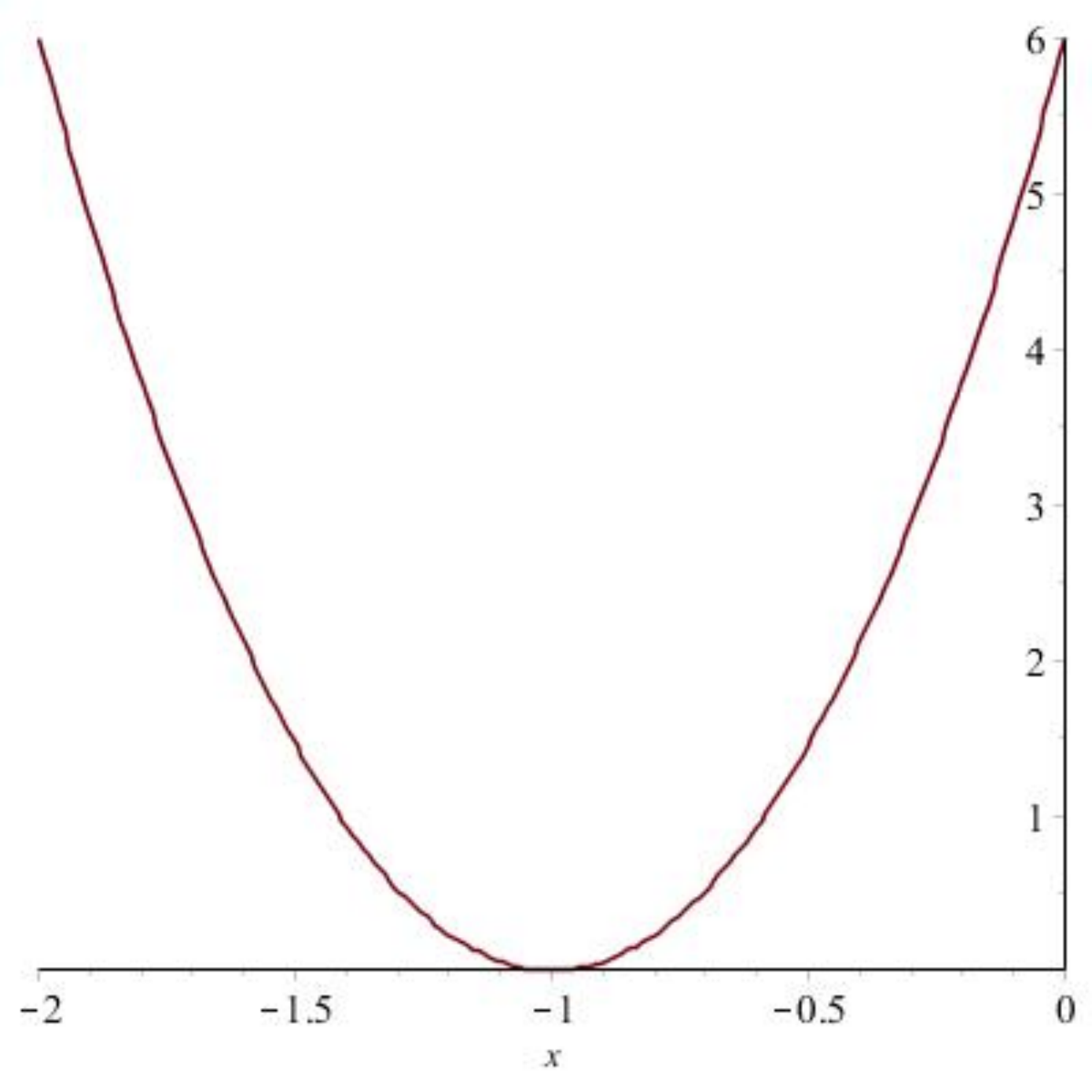}
\end{center}
\vskip-.6cm
\caption{The value of $e(x)$ as a function of $x$.}
  \label{fig3}
\end{figure}

\subsection{An example from experimental data}
This example is taken from the experimental data given by Even-Tzur \cite{et01} (see Figure \ref{fig2}):

\begin{figure}[t]
\begin{center}
\includegraphics[width=12cm]{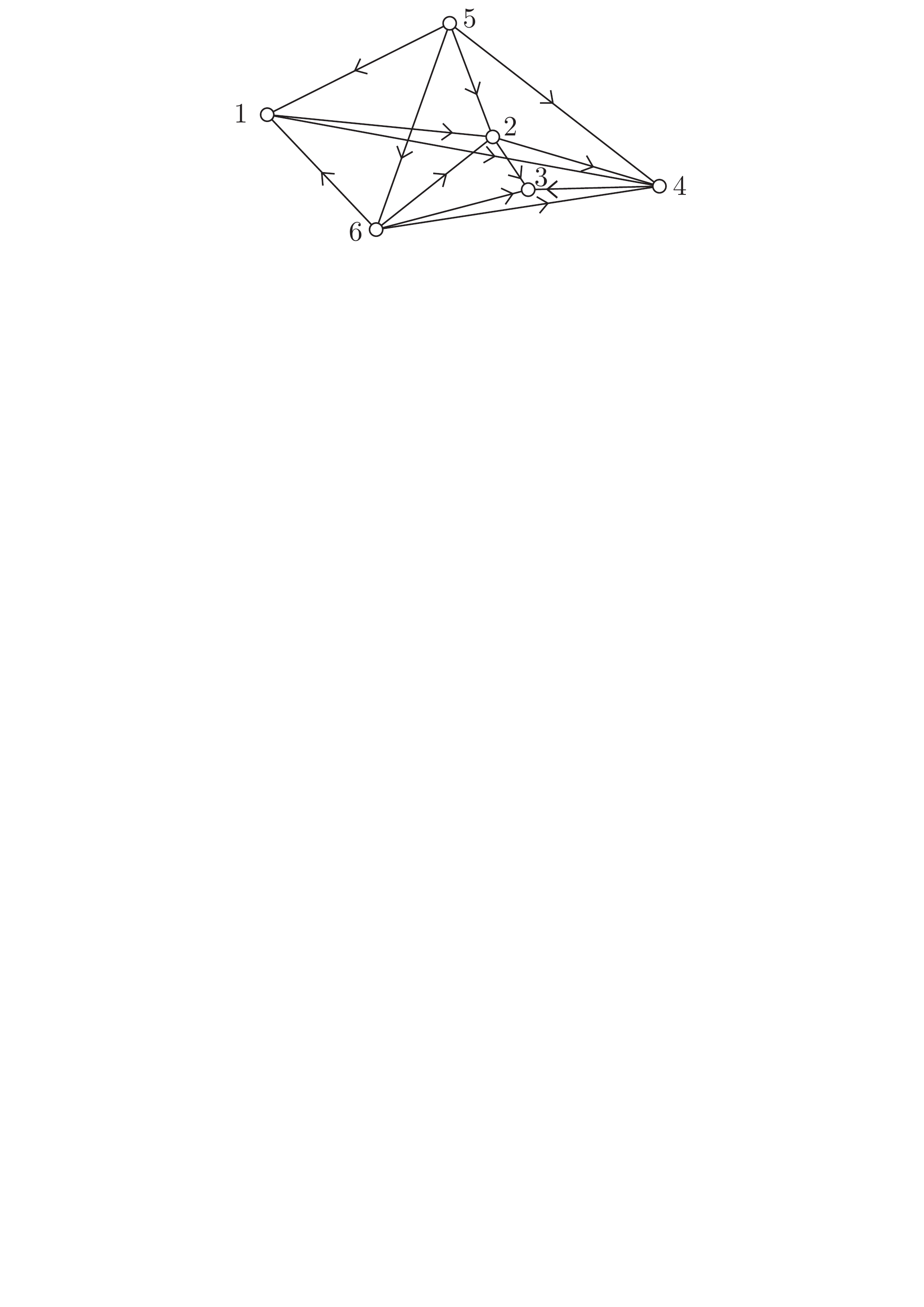}
\end{center}
\vskip-14.2cm
\caption{An experimentally obtained GPS network with 6 nodes.}
  \label{fig2}
\end{figure}

For the $x$-components, the polynomial matrix is 
$$
\A(z)=
\left(
\begin{array}{cccccc}
0& z^{-1718.388} & 0& z^{-5472.823} & z^{-2265.013} & z^{1538.763} \\
z^{1718.388} & 0 & z^{724.719} & z^{-3754.454} & z^{-546.654} & z^{3257.146}\\
0& z^{-724.719} & 0 & z^{-4479.188} & 0 & z^{2532.446} \\
z^{5472.823} & z^{3754.454} & z^{4479.188} & 0& z^{3207.794} & z^{7011.591}\\
z^{2265.013} & z^{546.654} & 0& z^{-3207.794} & 0& z^{3803.805} \\
z^{-1538.763}& z^{-3257.146} & z^{-2532.446} & z^{-7011.591} & z^{-3803.805} & 0
\end{array}
\right)
$$
The norms of the diagonal vectors of the testing matrices $\T^{(r)}=\A(2)^r-\A(1)^r$ for $r=1,\ldots,6$, are
$$
 0,\ 0,\ 0.003,\ 0.010,\ 0.075,\ 0.362,
$$
indicating that there is some error in the measures.
To check how important it is, we look at the diagonal of the matrix $\T^{(3)}=\A(2)^3-\A(1)^3$, which is,
$$
(0.00113,\ 0.00094,\ 0.00117,\ 0.00139,\ 0.00099,\ 0.00157)
$$
(and similarly for other values of $r>3$)
showing that, since all the entries are `small enough', say, of the order of $10^{-3}$, there are no gross errors for this coordinate in the GPS measurements.

Now, suppose that, as in \cite{et01}, that an error occurred in the vectors from point $1$ to $4$, and from point $5$ to $4$, with respective $x$-measurements $-5.472.959$ (instead of the previous $-5472.823$, and $-2307.933$ (instead of the previous $-2307.794$).
Then, the norms of the diagonals of the matrices $\T^{(r)}=\A(2)^r-\A(1)^r$ for $r=1,\ldots,6$, namely
$$
 0,\ 0,\ 0.050,\ 0.157,\ 1.171,\ 5.505,
$$
indicate that the errors are larger than before.
Indeed, the diagonal vector of the matrix $\T^{(3)}=\A(2)^3-\A(1)^3$ is now,
$$
(0.01573,\ 0.01583,\ 0.00117,\ 0.03485,\ 0.01980,\ 0.02017),
$$
showing errors of the order of $10^{-2}$ which, with the above criterium, would be considered as ``gross errors". Moreover, the largest entry of the above vector corresponds to point $4$, showing possible errors in the corresponding arcs.

If the arc between points $4$ and  $5$ has weight $3207.933+x$, and the arc between points $1$ and $4$ has weight $5472.959+y$. Then we get an ``error" as a function of $x,y$. Namely,
\begin{align*}
e(x,y) & =\tr\A(2)^4-\tr\A(1)^4\\
 &=8.09\cdot 2^{x-y}+7.91\cdot 2^{y-x}+26.09\cdot 2^x+22.07\cdot 2^{-x}+26.16\cdot 2^y+22.03\cdot 2^{-y}-112,
\end{align*}
which has minimum value $0.02$ at $(x,y) = (-0.124, -0.120)$ (see Figure \ref{fig4}), so indicating that the corrected values are, respectively, $3207.933-0.124=3207.809$ and $5472.959-0.120=5472.839$. These values are very closed to those not implying gross errors, namely, $3207.794$ and $5472.823$.

\begin{figure}[h]
\begin{center}
\includegraphics[width=8cm]{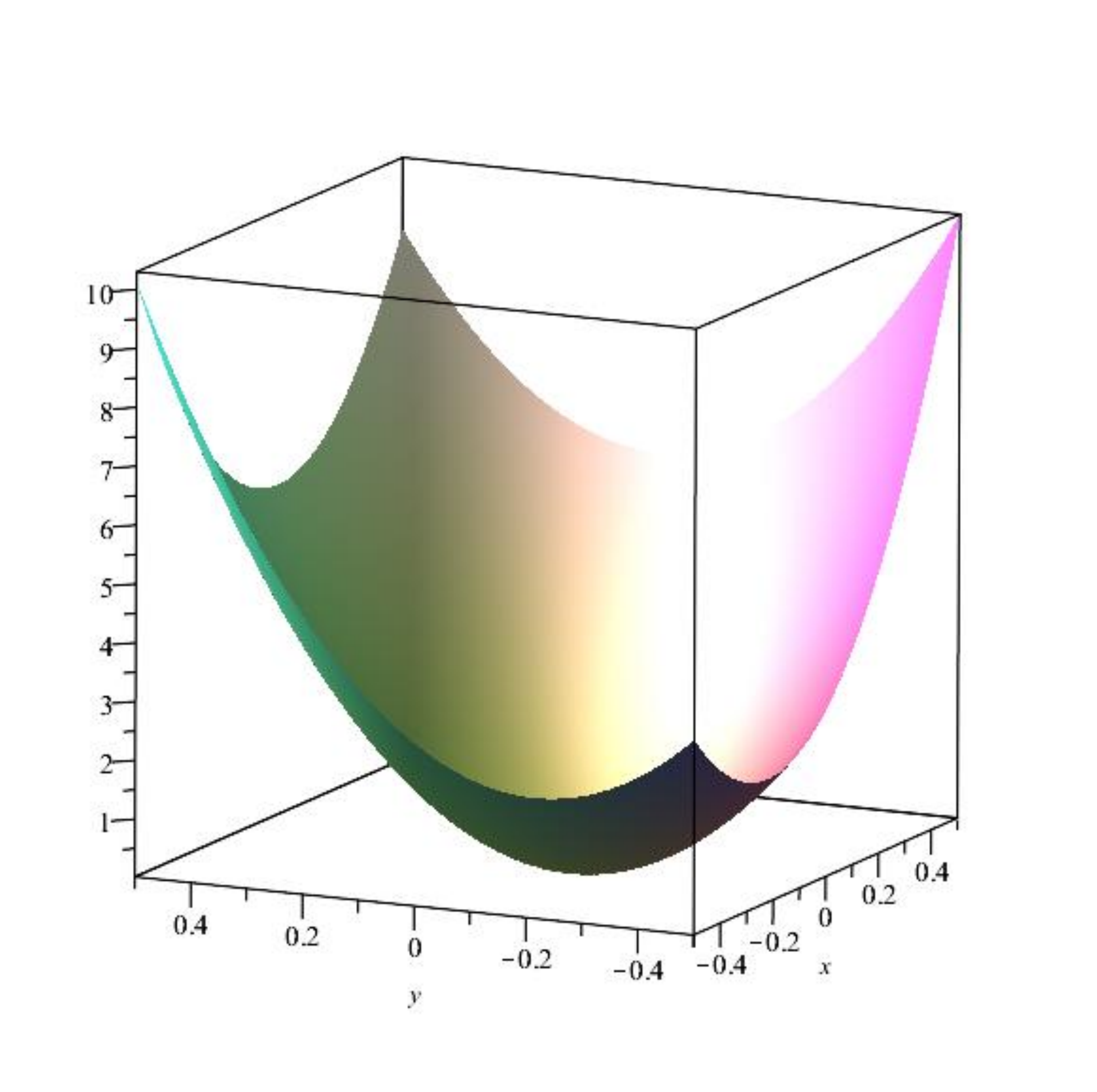}
\end{center}
\vskip-.8cm
\caption{The value of $e(x,y)$ as a function of $x,y$.}
  \label{fig4}
\end{figure}


\end{document}